\NeedsTeXFormat{LaTeX2e}
\documentclass{article}
\usepackage{amsthm}
\usepackage{color}
\usepackage{enumerate}
\usepackage{amssymb,latexsym,amsxtra,amscd,amsfonts}
\usepackage[mathscr]{eucal}
\usepackage{dsfont}
\usepackage{calrsfs}


\pagestyle{plain}
\numberwithin{equation}{section}

\footskip=10mm







\newcommand{\theoref}[1]{Theorem~\ref{#1}}

\newcommand{\lemref}[1]{Lemma~\ref{#1}}
\newcommand{\prref}[1]{Proposition~\ref{#1}}


\makeatletter
\def\@biblabel#1{#1.}
\makeatother
\newcounter{obr}[section]

\newcounter{pvv}[section]
\renewcommand{\thepvv}{\thesection.\arabic{pvv}}
\newenvironment{pv}[2][]{\begin{trivlist}\refstepcounter{pvv}%
\item[\hspace{\labelsep}\normalfont\bfseries\thepvv. #2%
  \def\tmp{#1}\ifx\tmp\empty\else{} (#1)\fi.]}%
{\end{trivlist}}
\newenvironment{df}{\begin{pv}{Definition}}{\end{pv}}
\newenvironment{theo}[1][]{\begin{pv}[#1]{Theorem}\begin{itshape}}{\end{itshape}\end{pv}}
\newenvironment{pr}{\begin{pv}{Proposition}\begin{itshape}}{\end{itshape}\end{pv}}
\newenvironment{lem}{\begin{pv}{Lemma}\begin{itshape}}{\end{itshape}\end{pv}}

\newenvironment{exm}{\begin{pv}{Example}\begin{itshape}}{\end{itshape}\end{pv}}

\newcommand{\set}[2]{\ensuremath{\{ #1\,:\; #2\}}}

\newcommand{\cc}{\ensuremath{\mathbb C}}
\newcommand{\rr}{\ensuremath{\mathbb{R}}}

\newcommand{\8}{\ensuremath{\infty}}

\def\Ker{\mathop{\rm Ker}\nolimits}

\newcommand{\f}{\ensuremath{\varphi}}
 
\renewcommand{\l}{\ensuremath{\lambda}}




\newcommand{\vNa}{von Neumann algebra}

\newcommand{\Ca}{$C^\ast$-algebra}

\newcommand{\HS}{Hilbert space}

\newcommand{\ONB}{orthonormal basis}

\newcommand{\st}{such that}

\newcommand{\seq}[2]{\ensuremath{#1_1, \ldots, #1_{#2}}}

\newcommand{\nor}[1]{\ensuremath{\|#1\|}}

\newcommand{\J}[3]{\{#1,#2,#3\}}

\begin{document}
	
\begin{center}
{\bf \LARGE   Dye's Theorem for tripotents in von Neumann algebras and JBW$^*$-triples}\\[5mm]

{\Large Jan Hamhalter }\\[5mm]

Czech Technical University in Prague\\
Department of Mathematics\\
Faculty of Electrical Engineering\\
Technicka 2, 166 27 Prague 6\\
Czech Republic \\
hamhalte@fel.cvut.cz\\
\end{center}
\vspace{1cm}

{\small Abstract. We study morphisms of the generalized quantum logic of tripotents in JBW$^*$-triples and von Neumann algebras. Especially,  we establish generalization of celebrated Dye's  theorem on orthoisomorphisms between von Neumann  lattices   to this new  context.  We show one-to-one correspondence between maps on tripotents preserving orthogonality,  orthogonal suprema, and reflection $u\to -u$, on one side,  and    their extensions to  maps that are  real linear on  sets of  elements  with bounded range tripotents on  the other side.   In a more general  description we show that quantum logic morphisms on tripotent structures   are given by a family of Jordan *-homomorphisms on 2-Peirce subspaces.  By examples we exhibit new phenomena for tripotent morphisms that have no analogy for projection lattices and demonstrated that  the above mention tripotent versions of  Dye's theorem cannot be improved.  On the other hand,  in a  special case of  JBW$^*$-algebras we can  generalize Dye's result directly. Besides we show that structure of tripotents in C$^*$-algebras determines their  projection poset and is a complete Jordan invariant for \vNa s.     }\\

key words: tripotent poset, quantum logic morphisms,  JBW$^*$ algebras, JBW$^*$-triples.         \\

MSC2010 classification:  17C65,  46L10, 81P10    \\

\section{Introduction}

     The aim of the present paper is to explore the concept of orthogonality and order in structures associated with \vNa s and Jordan triples. Especially, we study morphisms of projection lattices of JBW$^*$-algebras and generalized quantum logics of tripotents in JBW$^*$-triples. The former structure is going back to John  von Neumann's  work on continuous geometry \cite{vN-projective geometry},  foundations of operator algebras  \cite{Kadison},  and  foundations of quantum theory \cite{Neumann-F}. The latter structure  is younger and stems from infinite dimensional holomorphy, Jordan theory and its applications to  mathematical physics \cite{Chu,Friedmann,Upmeier}.  Main results of the paper concern generalizations of celebrated Day's theorem \cite{Dye} for the above indicated order  structures showing their intimate connection with their underlying  linear structure. 
       
        Dye's theorem is a generalization of famous Wigner's theorem about symmetries of quantum system   \cite{Wigner}. Ulhorn's logic theoretic version \cite{Ulhorn}  of this principle  states that orthogonality relation in the set  $P(H)$ of projections acting on a \HS{,} $H$ with $\dim H\ge 3$, determines dynamics of the system. More precisely, if $\f:P(H)\to P(H)$ is  a bijection preserving orthogonality in both directions, i.e. 
        \[ pq=0   \Longleftrightarrow \f(p)\f(q)=0\,,              \] 
        then there is a unitary or antiunitary operator $u$ acting on $H$ \st{}
        \[      \f(p)= u^*pu\,, \qquad \text{ for all } p\in P(H).                             \]
     
      In unifying  reformulation avoiding distinguishing between unitary and antiunitary case,  we can say that there is a Jordan *-isomorphism $J: B(H)\to B(H)$, where $B(H)$ is the algebra of all bounded operators acting on $H$,     \st{}  
      \[     \f(p)= J(p)\,, \qquad \text{ for all } p\in P(H).                \]   
    Following advances made by John von Neumann in his project of  continuous geometry \cite{vN-projective geometry},  Day tackled the problem  of describing orthoisomorphisms between projection lattices of \vNa s{} which involves Ulhron's result as a very special case. His principal result, known today  as Dye's theorem,  reads as follows
    
    \begin{theo}{\bf (Dye's Theorem)}\label{DT} 
    	Let $M$ and $N$ be \vNa s, where $M$ does not have Type $I_2$ direct summand.  Let $\f:P(M)\to P(N)$ be orthoisomorphism between projection lattices of $M$ and $N$, respectively. Then there is a Jordan *-isomorphism $J: M\to N$ extending $\f$.     	
    \end{theo}            

The assumption on absence of Type $I_2$ direct summand corresponds to assumption $\dim H\ge 3$ in Ulhorn's version of  Wigner's theorem. Dye's theorem shows that orthogonality relation in the projection lattice of a \vNa{} determines its Jordan *- structure given by the anticommutatnt product $(x,y)\to \frac 12 (xy+yx)$ and the standard *-operation. This  result was then generalized beyond \vNa s{} by the author in \cite{Ham-Dye}.  \\       

The main message of Dye's theorem is the fact that any map preserving orthogonality relation can be "linearised"  by finding its {\em linear} extension. Once we have linearity,  the fact that the  extension has to be Jordan *-isomorphism can be  deduced relatively easily  as a consequence of preserving projections and their orthogonality. L.J.Bunce and J.D.M.Wright fully realised this fact in their  work \cite{Bunce3} that was a turning point for  further investigation. Namely, they showed that Gleason's theorem on extending  probability measure on projections  to a linear  map can be applied to generalizing Dye's theorem in a few directions. At first it enabled to show a version of Dye's theorem for non-bijective maps between projections. Moreover,  they established  validity of Dye's theorem for projection structures in JW-algebras that are more general than \vNa s. (JW-algebra is  a weakly closed subspace of the real space of self-adjoint operators acting on a \HS{} that is closed    under forming the  squares $a\to a^2$.)  We continue this research by showing Dye's theorem for even more general JBW$^*$-algebras.  Let $A$ and $B$ be JBW$^*$-algebras, where $A$ does not have Type $I_2$ part.   We show that any map $\f: P(A)\to P(B)$   between projection lattices  of  JBW$^*$-algebras  such that \f{} preserves zero, orthogonality and suprema of orthogonal    projection (such a map will be called  quantum logic morphism) is a restriction of a Jordan *-homomorphism $J:A\to B$.  \\

  Principal contribution  of our paper concerns Dye's theorem in the context of JBW$^*$-triples. JBW$^*$-triples are Banach spaces that generalize \vNa s. However, they involve also Hilbert spaces and structures  of operators between different Hilbert spaces (rectangular matrices), which are seldom \vNa s. The theory of JBW$^*$-triples has been developing rapidly  recently and is vital for infinite dimensional complex analysis, differential geometry and mathematical physics.              Projection lattice is a  prominent order structure associated with classifications  of \vNa s. There is another poset, called tripotent poset,  that underlies theory of JBW$^*$-triples in a similar way.  An element $u$ in a \vNa{} $M$ is a tripotent if it is partial isometry, i.e. if  $u=uu^*u$. This class of operators involves unitary operators as well as projections. Order is defined as follows 
    \[ u\le v \qquad \Leftrightarrow\qquad u=uv^*u\,.    \]
The resulting poset $(U(M), \le)$ of all tripotents in $M$ includes projection lattice $P(M)$ as a principal ideal. Besides, there is a natural orthogonality relation on $U(M)$. The tripotents  $u$ and $v$ are orthogonal  if $uu^*v=0$. It it the same as to say that  kernels of $u$ and $v$ are orthogonal subspaces in the underlying \HS{} and the same holds for ranges of $u$ and $v$. The importance of the structure $(U(M), \le, \perp      )$ was recognized by  Edwards and R\"{u}timann who studied tripotent posets from the perspectives of orthomodular structures and obtained many deep results between functional analysis and theory of quantum logics \cite{Edwards-Rut,Edwards-Rut2}. Especially, they showed that tripotent poset of a JBW$^*$-triple is Dedekind complete  (i.e. each upper bounded set has supremum) and found connection of this poset with facial geometry of the unit ball. Unlike von Neumann projection lattice,       triple poset is not upward directed and has many maximal elements. However,  it can be  organized into generalized quantum logic (nonunital version of orthomodular poset).        In further development a  fruitful interplay of the theory of orthomodular posets and JBW$^*$-triples initiated by     Edwards and R\"{u}timann seemed to be  neglected until recent paper \cite{HKP}. We believe that it a pity in the light of recent development in JBW$^*$-triple theory as well in foundations of quantum theory \cite{Landsman}. For this reason we would like to revive this line of the research by studying morphisms between tripotent posets and appropriate forms of Dye's theorem for these structures.\\        

In  the beginning of our research,   we realised that one  cannot generalise Dye's theorem verbatim to Jordan triple structures. Indeed, easy counterexamples show that there is an orthoisomorphism  between tripotents that does not  preserve order  and cannot be so extended to a Jordan morphism. That is why we have to consider quantum logic morphisms (see \cite{Bunce3})), i.e maps  preserving suprema of orthogonal elements,  in addition to preserving orthogonality. Further, our counterexamples show that even if we exclude Type $I_2$ part as in original  version of Dye's theorem, the theorem is not valid. In other words, quantum logic morphisms are always more general than restrictions of linear Jordan maps. However,  we show that any quantum logic morphism of triple structures   extends to a map that is additive  on relevant  parts. Most importantly,   we establish one-to-one correspondence between quantum logic morphisms preserving  reflection $u\to -u$   and real homogeneous  maps preserving tripotents that are additive with respect to elements  whose range tripotents have upper bound.  Such maps are called local Jordan morphisms.    Another approach how to describe  all quantum    logic morphisms elaborated in this paper is based on a family of linear Jordan maps that are defined on homotopes and are consistent in some sense. Beside generalization of Dye's theorem we also bring some new results on tripotent posets and their morphisms. \\

Tripotent poset constitute invariant in the category of JB$^*$-triples. Even if it is  not in the main focus of our paper, we also discuss briefly a  natural question of whether it is a complete invariant in case of JBW$^*$-triples. We show that the answer is in the positive in case of triple posets in \vNa s algebras.  More specifically, we prove that the following conditions are equivalent:  (i) \vNa s $M$ and $N$ are Jordan *-isomorphic (2) tripotent posets $U(M)$ and $U(N)$ are isomorphic as generalised quantum logics (iii)  projection lattices $P(M)$ and $P(N)$ are isomorphic as quantum logics. We also show that the tripotent posets of two C$^*$-algebras are isomorphic if and only if  their projection posets are isomorphic.    As a consequence, tripotent posets  carry the same amount of information as projection lattices. This seems to be interesting because tripotent poets  include also non normal elements and are much larger than projection posets \\

    Our paper is organised as follows. After introduction and recalling basic notions we introduce orthomodular order structures in the second section and prove some easy statements needed later. In the third part we generalise Dye's theorem to projection lattices of  JBW$^*$ algebras.  In the forth  section of the present note we focus on ordered tripotent structures and their morphisms. We exhibit some examples showing that our later results are optimal. Sections 5 and 6 contain main results describing quantum logic morphisms between tripotent  structures in terms of families of Jordan maps and local Jordan maps. Concluding section  is an invitation to further research and contains    discussion on complete order invariant of von Neumann algebras. \\

Let us now recall a few concepts and fix the notation. For the theory of \Ca s and \vNa s  the reader is referred  to monographs \cite{Kadison,Tak}. For fundamentals of the theory of Jordan algebras and  Jordan triple systems we recommend  monograph \cite{Chu,Garcia,Hanche,Upmeier,Topping}.\\

Given a normed space $X$, $B_1(X)$  shall denote its unit ball. By the symbol $B(H)$ we shall denote the algebra of all bounded operators on a complex \HS{} $H$. 
Jordan algebra is a real or complex commutative algebra endowed with a Jordan product, $\circ$,  satisfying the identity $x\circ (x^2\circ y)= x^2\circ (x\circ y)$.   By a JB-algebra we mean a real Banach space that is simultaneously a (real) Jordan algebra, for which we have $\nor{x\circ y}\le  \nor{x}\cdot \nor{y}$,   $\nor{x^2}=\nor{x}^2$ and $\nor{x^2-y^2}\le \max\{\nor{x^2}, \nor{y^2}\}$. By a JC-algebra we understand  a real closed subalgebra of the self-adjoint part of  $B(H)$ that is closed under the squares and whose Jordan product is $x\circ y=\frac 12 (xy+yx)$. 
Let $A$ be a JB-algebra. Positive elements in $A$ are  elements of the form $a^2$,  $a\in A$.   
Elements $x,y\in A$ are said to operator commute if $x\circ (y\circ z)= y\circ (x\circ z)$ for all $z\in A$. The center $Z(A)$ of $A$ is the set of all elements operator commuting with each element of $A$. 
A JW-algebra is a JC-algebra that is moreover closed in the weak operator  topology on $B(H)$. JBW-algebra is a JB-algebra that has a (unique) predual.
Given $a\in A$ we shall define an operator $U_a$  acting on $A$ by $U_a(x)= 2 a\circ (a\circ x) - a^2\circ x.$  Let us remark that operator $U_a$ is positive  in the sense that leaves the positive cone of $A$ invariant. 


Let $A$ be a complex Jordan algebra endowed with an involution $\ast$. Then one can define the   triple product, $\circ$, on  $A$ by

\begin{equation}\label{*} \{a, b, c\}= a\circ (b^*\circ c)+ c\circ (a\circ b^*)-b^*\circ (a\circ b)\,.
\end{equation}

Let $A$ be JC$^*$-algebra, that is a closed complex Jordan subalgebra of $B(H)$ which  is invariant with respect to  adjoints and  endowed with the Jordan product $a\circ b = \frac 12 (ab+ba)$. Then we have that 
\[   \J abc = \frac 12 (ab^*c+cb^*a)       \,.              \]

JB$^*$-algebra is a  Jordan *-algebra that is simultaneously a Banach space whose norm satisfies:
\[  \nor{a^*}=\nor{a}\,, \qquad \nor{a\circ b}\le \nor a \nor b\,, \qquad  \nor{\{a,a,a\}}=\nor a^3\,.\] 
The self-adjoint part of a JB$^*$-algebra is  the set $H(A)=\set{a\in A}{a=a^*}$.  It is a JB-algebra and each JB-algebra can be  obtained in this manner \cite{Wright}. If a JB$^*$-algebra admits a predual, then it is called a JBW$^*$-algebra.  A projection in a JB$^*$-algebra (resp. JB-algebra)  is a self-adjoint idempotent (resp.  idempotent). The set of projections in a JB$^*$-algebra  or in a JB-algebra $A$ will be denoted $P(A)$.  A linear functional $f$  on a JB$^*$-algebra $A$ is called positive if it takes positive values on elements in the positive part of $A$, that is if $f(a^2)\ge 0$ for all $a\in H(A)$. If $f$ is moreover norm one, then it is called state. \\

A Jordan triple is a complex space $E$ endowed with triple product $(a,b,c)\to \{a, b, c\}$ which is symmetric and linear in the first and the third variable and conjugate linear in the second variable and satisfies the identity

\[ [L(a,b), L(c,d)]= L(\{a,b,c\}, d)-L(c, \{d, a, b\})=\] \[ =L(a, \{b,c,d\})-L(\{c,d,a\}, b)\,,\] 
where $[\cdot,\cdot]$ denotes the commutator and $L$ is the  mapping from $E\times E$ into the space of linear operators on $E$ defined by $L(a,b)c=\{a,b,c\}$. A Jordan  triple $E$ is said to be a JB$^*$-triple if the following holds:
\begin{itemize}
	\item $E$ is a Banach space and $L$ is a continuous map from $E\times E$ into the space  $B(E)$
	of bounded operators acting  on $E$.
	\item For each $a\in E$, $L(a,a)$ is a hermitian operator with nonnegative spectrum and satisfies $\nor{L(a,a)}=\nor{a}^2$. 
	(Let us recall that a bounded  operator $T$ acting on some complex Banach space  is called hermitian if $\nor{ e^{itT}} =1$ for all $t\in \rr$. )
\end{itemize}

The JBW$^*$-triple is a JB$^*$-triple that is a dual Banach space. Any JBW$^*$-algebra  endowed with the triple product (\ref{*})  is  a  JBW$^*$-triple.\\

Tripotent $u$ in a JB$^*$-triple $E$ is  an element  satisfying $\{u,u,u\}=u$.  The set of all tripotents of $E$ will be denoted by $U(E)$. 
Each tripotent $u$ is responsible for decomposition of  $E$ into closed subspaces 
\[ E = E_0(u)\oplus E_1(u)\oplus E_2(u)\,,\]
where $E_i(u)$ is the eingenspace of $L(u,u)$ corresponding to the  eigenvalue $\frac i2$. 
A complete tripotent is a tripotent $u\in E$ for which $E_0(u)=\{0\}$. 
It is known that $x$ is an  extreme point of the unit ball of a JBW$^*$-triple if and only if it is a complete tripotent. 
The space   $E_2(u)$ can be made into  JBW$^*$-algebra with respect to the following involution $*_u$   and  Jordan product $\circ_u$:
\[ x\circ_u y = \{x, u,y\}\,, \qquad x^{*_u}=\{ u, x,u\} \qquad x,y\in E_2(u)\,.\]
The tripotent $u$ is the unit in the JB$^*$-algebra $E_2(u)$.
The triple product induced by the Jordan product $\circ_u$  via (\ref{*})  coincides with the original triple product  restricted to $E_2(u)$. We shall denote the JB$^*$-algebra defined in this way by the symbol  $E(u)$. Sometimes  $E(u)$  is called homotope of $E$ corresponding to $u$. Tripotent $u$ in $E$ is called unitary if $E_2(u)=E$. In  that case the triple product on $E$  is coming  from the underlying JB$^*$-algebra $E(u)$.           \\

  Let $H$ and $K$ be Hilbert spaces and $B(H,K)$ the space of all bounded operators  from $H$ to $K$. A J$^*$-algebra is a closed subset of $B(H,K)$ which is  closed under the product $a\to aa^*a$. When endowed with the triple product $\J abc = \frac 12 (ab^*c+cb^*a)$,  J$^*$-algebra is a JB$^*$-triple.  
   If $E$ is a J$^*$-algebra, then $u\in E$ is a tripotent if and only if it is a partial isometry, that is an element $u$ for which $uu^*$ and $u^*u$ are projections (in the corresponding spaces). We shall define initial projection of a tripotent  $u$ as  $p_i(u)= u^*u$ and final projection of $u$ by $p_f(u)= uu^*$. In this case we have 
   \[  E_2(u)= p_f(u)Ep_i(u)\,.                \]
  If a  J$^*$-algebra $A$  is closed in the weak*-topology,     then it is a JBW$^*$-triple.   An example of a J$^*$-algebra is a JC$^*$-algebra.   An example of a J$^*$-algebra that may not be a  JB$^*$-algebra is the JBW$^*$-triple $B(H)_a$ of all antisymmetric operators in $B(H)$. Let us recall  that an operator $X\in B(H)$ is antisymmetric if $X^t=-X^t$, where $X\to X^t$ is the  transpose operation with respect to a fixed \ONB{} of $H$. It is known that  this JB$^*$-triple system is a JB$^*$-algebra if and only if $H$  does not  have  odd finite dimension.\\
  
  Let $x$ be a nonzero  element in JBW-algebra $M$. Its range projection $p$ is the smallest projection in $M$ \st{} 
  $p\circ x = x$. This projection is always contained in a JBW-subalgebra of $M$ generated by $x$.   
  For each  norm one  element $x$ in a JBW$^*$-triple $E$, $r(x)$ will denote its     range tripotent. It is the smallest tripotent  $e$ in $M$ for which   $x$ is a positive element in $E(e)$.   If $x$ is a general nonzero element then its range tripotent is the range tripotent of $\frac x{\nor x}.$ We  set $r(0)=0$. The range tripotent  is always contained in a the JBW$^*$-subtriple of $E$ generated by $x$. Suppose that $x$ is a positive element in $E(u)$  for some tripotent $u\in E$.  Then its range tripotent coincides with its range projection in $E(u)$.\\
  
   Let $(A,\circ)$ and $(B,\circ)$ be JB$^*$-algebras. A linear map $J:A\to B$ is called a Jordan *-homomorphism if $J(a\circ b)=J(a)\circ J(b)$ and $J(a^*)=J(a)^*$ for all $a,b\in A$.  It is called a  Jordan *-isomorphism if it is a bijective Jordan *-homomorphism. Jordan homomorphism $J:(A,\circ)\to (B,\circ) $ between JB-algebras $A$ and $B$ is a map preserving product, that is $J(a\circ b)=J(a)\circ J(b)$. If $J$ is  is bijective it is called a Jordan isomorphism. A linear map $J:E\to F$ between JB$^*$-triples $E$ and $F$ is called a Jordan triple homomorphisms if it preserves triple product, that is $J\J abc =\J {J(a)}{J(b)}{J(c)}$. If a Jordan triple homomorphism is a bijection, then we are talking about Jordan triple isomorphism. Celebrated Kaup's theorem  assures that a surjective linear operator between JB$^*$-triples is an isometry if and only if it is  a triple isomorphism.  \\



	It is well known that Dye's theorem does not hold for algebra of two by two matrices. In fact,  it does not hold for JBW-algebras of Type $I_2$ which is much wider class. We do not give original definition of these algebras since we are not going to use it. However,  we describe what Type $I_2$ means for readers not familiar with classification theory of Jordan algebras.  Let $H_n$, where $n<\8$,  be an $n$-dimensional real \HS. We define JB-algebra $V_n=H_n\oplus  \rr$ as a Banach space with norm $\nor{x\oplus \l 1     }=\nor x +|\l|$ and with  multiplication
	\[ (a+\l 1)\circ (b+\mu 1)= (\mu a+\l b)\oplus (\langle a, b \rangle +\l\mu )    \,.                            \] 
	Such  an algebra is called (finite dimensional) spin factor. Let $C(X, V_n)$ be   the JBW-algebra of continuous functions from a hyperstonean space $X$ into spin factor $V_n$ with pointwise defined Jordan multiplications  and maximum norm.  Type $I_2$ algebra is isomorphic to a nonzero direct sum
	\[  \sum_{k=0}^\8  A_{n_k}  \,,         \]
	where $(n_k)$ is a strictly increasing sequence of integers and each $A_{n_k}$ is either zero or  the algebra $C(X_k, V_{n_k})$, where       $X_{n_k}$ is a hyperstonean space.  
		We shall say that a JBW-algebra is regular if it does not contain any  direct summand  of Type $I_2$. A JBW$^*$-triple $E$ is said to be regular if $H(E(u))$ is a  regular JBW-algebra for each complete tripotent $u\in E$. If $M$ is a \vNa{},  then  it is regular as a JBW$^*$-triple if and only if it does not contain any Type $I_2$ direct summand. Indeed, having a complete tripotent $u$ in a \vNa{} $M$,  we shall show in the proof of  \lemref{Ev1} that there is a unital Jordan triple isomorphisms between  $M=M(1)$ and $M(u)$.  This isomorphism  is  a Jordan *- isomorphism. Any Jordan *-isomorphism preserves Type $I_2$ direct summands.  Therefore $M$ does not have any Type $I_2$ direct summand if and only if the same holds for $M(u)$.

Jordan triple isomorphism $\Phi: E\to F$. \section{Order structures}

In this section we gather standard definitions and notations concerning ordered structures. Let $(P, \le)$ be a partially ordered set (poset in short). Given $a,b\in P$ we shall denote $[a,b]=\set{x}{a\le x\le b}$. This set will called the interval.  By $a\land b$ and $a\lor b$ we shall mean the join (infimum) and meet (supremum)  of the set $\{a,b\}$, respectively. In case of general set $S\subset P$, we shall denote  the respective meet and join by $\bigwedge S$ and $\bigvee S$. A subset of a poset is called bounded if it has lower and upper bound (that is, if it is a subset of some interval). Further, a subset $S\subset P$  is called upper bounded if it has  an upper bound.  By a conditionally complete poset we mean a poset for which every upper bounded nonempty set has supremum.  
An upward directed poset is a poset in which every two point set has an upper bound. Now we shall recall basic concepts of morphisms between posets.

\begin{df}
	Let $P$ and $Q$ be posets, and $\f:P\to Q$ a map. Then \f{} is called 
	\begin{itemize}
		\item an order  morphism  if $a\le b $ implies $\f(a)\le \f(b)$ for each $a,b\in P$.  We also say that \f{} preserves order in one direction; 
		\item an embedding of posets if $a\le b$ if and only if $\f(a)\le \f(b)$ for each $a,b\in P$. We also say that \f{} preserves order in both directions. Order embedding is always  an injective map;
		\item an order isomorphism if it is a surjective order embedding.
		
	\end{itemize}
\end{df}

The same terminology will be employed in case of order reversing maps. For example, a map $\f:P\to Q$ is an order antimorphism if  $a\le  b$ implies $\f(a)\ge \f(b)$ in $Q$.

We shall be mainly interested in poset endowed with some concept of orthogonality or orthocomplementation. 

\begin{df}\label{kometa}
	Let $P$ be poset with a least element 0. Orthogonality relation, $\perp$,  on $P$ is a relation satisfying the following conditions:
	
	\begin{enumerate}
		\item $\perp$ is a symmetric relation.
		\item $0\perp a$ for each $a\in P$. 
		\item $a\perp a$ implies $a=0$.
		\item If $a\perp b$, then $c\perp b$ whenever $c\le a$. 
	\end{enumerate}
	
\end{df}

\begin{df}
	Let $P$ be a poset with a least element 0 and a greatest element 1.
	
	\begin{itemize}
		\item 
		$P$ is called orthoposet if there is an operation $a\to a^\perp$ on $P$, called orthocomplementation,  fulfilling the following conditions for each $a,b\in P$: 
		\begin{enumerate}
			\item $a\le b$ implies $b^\perp \le a^\perp$
			\item $a^{\perp\perp}=a$
			\item $a\land a^\perp =0$  and $a\lor a^{\perp}=1$.
		\end{enumerate}   		
		We say that two elements $a,b$ in an orthoposet are orthogonal, written $a\perp b$,  if $a\le b^\perp$. Let us remark that      $\perp$   is an  orthogonality relation on $P$ in the sense of Definition~\ref{kometa}. 
		
		\item   An orthoposet $(P,\le, \perp)$ is called an orthomodular poset or quantum logic  if
		\[  a\lor b \text{ exists whenever } a\perp b               \]
		and  the following orthomodular law is satisfied:
		\[  b = a \lor  (b\land a^\perp)\,,     \]
		whenever $a\le b$. An orthomodular lattice is an orthomodular poset that is a  lattice.
		\item 	A generalized orthomodular poset $P$  (or a generalized quantum logic) is a poset with a least element 0, such that each interval  $[0, a]$, $a\in P$ is an orthoposet
		endowed with orthocomplemantation $ x\to x^{\perp_a} $ \st{}  the following conditions hold:
		\begin{enumerate}
			\item $([0,a], \le, \perp_a)$ is a quantum logic for each $a\in P$.
			\item If $a\le b$,  then $x^{\perp_a}=x^{\perp_b}\land a$ for all $x\in [0,a]$.  
		\end{enumerate}

		
	\end{itemize}

\end{df}

Let us remark that if $P$ is a quantum logic,  then it is canonically a generalized quantum logic with local orthocomplementation  on each interval $[0,a]$ given by 
$$ b^{\perp_a} = a\land b^\perp \qquad b\le a\,. $$  Therefore  quantum logics can be viewed as unital generalised quantum logics. Further, having generalized quantum logic $P$,   we can induce  a canonical orthogonality relation,  $\perp$, on $P$ by setting $a\perp b$ if there is a $v\in P$ with $v\ge a,b$ \st{} $a$ and $b$   are orthogonal in quantum logic $[0,v]$, that is $b\le a^{\perp_v}$.  It can be shown that this definition  does not depend on $v$.   A typical example of a generalized quantum logic is the poset $P(A)$ of projection in a (possibly nonunital) \Ca{} $A$. Indeed, for each projection $p\in A$ we introduce orthocomplementation $\perp_p$ on $[0,p]$ by  setting
\[ q^{\perp_p}=p-q\,, \]                  
for $q\le p$. 

\begin{df}
	Let $P$ and $Q$ be posets endowed with  relation of orthogonality, and \f{} a map $\f:P\to Q$.
	\begin{itemize}
		\item  \f{} is called orthomorphism if it preserves orthogonality relation in one direction, that is  if for each  $a,b\in P$
		\[  \f(a)\perp \f(b)  \text{ whenever } a\perp b\,.          \]	
		\item \f{} is called orthoisomorphism if  it is a bijection  preserving  orthogonality relation in both directions, that is if
		\[  \f(a)\perp \f(b)  \text{ if and only if  } a\perp b\,.         \] 
	\end{itemize}
\end{df}

\begin{df}
	Let $P$ and $Q$ be generalized orthomodular posets and $\f:P\to Q$.
	Then \f{} is called a quantum logic morphism if for each orthogonal $a,b\in P$ we have 
	\begin{enumerate}
		\item \f(0)=0
		\item $\f(a)\perp \f(b)$
		\item $\f(a\lor b)= \f(a)\lor \f(b)$.
	\end{enumerate}
	If \f{} is a bijection \st{} both \f{} an $\f^{-1}$ are quantum logic morphisms, then \f{} is called quantum logic isomorphism.   
\end{df}



We have the following simple observation.

\begin{pr}
	Let $\f: P\to Q$ be a quantum logic morphism between generalized orthomodular posets $P$ and $Q$. Then \f{} is an orthomorphism and order morphism.
\end{pr}

\begin{proof}
	By definition \f{} preserves orthogonality. Take $a\le b$ in $P$. Hence $b= a\lor a^{\perp_b}$. Then $ \f(b)= \f(a)\lor \f(a^{\perp_b})$. It says  that $\f(a)\le \f(b)$.

\end{proof}

On the other hand, in  the unital case we see that orthoisomorphism is a quantum logic isomorphism.

\begin{pr}\label{poset-1}
	Let $P$ and $Q$ be quantum logics  and $\f:P\to Q$  an orthoisomorphism.  Then \f{} is an order isomorphism preserving orthocomplements. 
\end{pr}

\begin{proof}
	As 0 is the only element that is orthogonal to every element of the poset,  we see that $\f(0)=0$. Further,  as  the largest element  1 can be characterized as the only element that is orthogonal only to 0, we can conclude that  $\f(1)=1$. Let $a\in P$. Then $\f(a), \f(a^\perp)$ are orthogonal.  Put $b=\f(a)\lor \f(a^\perp)$ . Then $\f^{-1}(b^\perp)$  is orthogonal to both $a$ and $a^\perp$. Therefore $\f^{-1}(b^\perp)\perp 1$ and so it is zero. Consequently, $b=\f(a)\lor \f(a^\perp)=1$. Put $c=\f(a^\perp)$. Then $c\perp \f(a)$ and $\f(a)\lor c=1$. As $c\le \f(a)^\perp$ there is,  by the orthomodular law,  $d\in Q$ with $c\perp d$ and $c\lor d=\f(a)^\perp$. Therefore $d\perp \f(a)$ and so $d\perp (c\lor \f(a))=1$. Hence $d=0$ and so $\f(a^\perp)= \f(a)^\perp$.   We have shown that \f{} preserves orthocomplementation. As $a\le b$ in $P$ if and only if $a\perp b^\perp$ and \f{} preserves orthocomplements and orthogonality, we have that \f{} preserves the order. It is also clear from symmetry that \f{} preserves the order in both directions.   
	This completes the proof. 
\end{proof}

\section{Projection lattices}

In this part we shall generalize  Dye's theorem to  projection lattices of JBW$^*$-algebras. For each JB$^*$-algebra $(A,\circ)$ we can associate its projection poset  $P(A)$, where order of projections is given by $p\le q$ if $p\circ  q=p$.  In this case $p+q=p\lor q$. If $A$ is unital, then  $P(A)$ becomes an   orthomodular poset with orthocomplementation $p\to 1-p$ . (If $A$ is nonunital, then $P(A)$ can be endowed with the structure of generalized orthomodular poset, but we shall not need this level of abstraction.)
We can define  orthogonality relation on $P(A)$ by $p\perp q$ if $p\circ  q=0$. If $A$ is unital, then orthogonality relation on $P(A)$ is the one induced  canonically by the   orthocomplementation, i.e. $p\perp q$ if $p\le 1-q$.             As a consequence of   \prref{poset-1} we have that any orthoisomorphism between projection structures of unital JB$^*$-algebras is   order isomorphisms preserving orthocomplements.  It is well known that if $A$ is a JBW$^*$-algebra,  then $P(A)$ is a complete lattice.  \\

In  \theoref{Dye-Jordan algebras} we  present
non-bijective version of Dye's theorem for JBW$^*$-algebras. Our sharpest weapon will be 
deep Gleason's theorem for Jordan algebras proved by Bunce and Wright in \cite{BW1,BW2}.  
Let us recall that positive finitely additive  measure on the projection  poset $P(A)$ of a JB$^*$- algebra $A$ is a  map $\varrho$  from $P(A)$ into an  interval $[0,\8]$ \st{}    $\varrho(p+q)=\varrho(p)+\varrho(q)$ whenever $p\perp q$.\\

\begin{theo}{\bf (Jordan version  of Gleason's Theorem)}\label{GT}
	
	Let $W$ be a  JBW-algebra \st{} $W$ does not contain  any Type $I_2$ direct summand. Let $\varrho$ be a finitely additive positive measure on $P(W)$. Then $\varrho$ extends to a unique positive functional on $W$.

\end{theo}

The following theorem has been proved in \cite{Bunce3} for JW-algebras.

\begin{theo}
	\label{Dye-Jordan algebras}
	Let $A$ and $B$  be  JBW$^{*}$-algebras \st{}  $H(A)$      does not contain  any Type $I_2$ direct summand.    Suppose that $\f: P(A)\to P(B)$ is a quantum logic morphism. 	Then $\f$  extends uniquely  to a Jordan *-homomorphism 	$ J: A\to B\,.$     
\end{theo}

\begin{proof} We shall follow ideas of the proof in \cite{Bunce3}.    
	First let us note that any Jordan homomorphism between self-adjoint parts of JBW$^*$-algebras (viewed as JBW-algebras) can be canonically extended to a Jordan *-homomorphism between whole algebras. Therefore we restrict  ourselves to  JBW-algebras $M=H(A)$ and $N=H(B)$ and show that $\f$ extends to a Jordan homomorphism $J:M\to N$.   
	If  $p$ and $q$ are orthogonal projections,  then their supremum is their sum $p+q$. Hence, by the property of quantum logic morphism  we have that $\f(p+q)=\f(p)+\f(q)$ whenever $p\perp q$.\\

	Let us take a positive functional $f$  on $N$. Composition $f\circ \f$ is a positive  finitely additive measure on  $P(A)$. According to \theoref{GT}, we have that 
	there is a positive functional $\hat{f}$  on $M$ that extends $f\circ \f$. Let $L(M)$ be the linear span of $P(A)$ in $M$. Pick up $x\in L(M)$ and suppose that we have two expressions of $x$ as linear combinations of projections, say
	\[  x=\sum_{i=1}^n \l_ip_i= \sum_{j=1}^m \mu_j q_j\,,                          \]
	where $\seq \l n, \ \seq \mu m \in \rr$ and $\seq pn\,, \  \seq qm$ are  projections. Then   we have that 
	\[ \hat{f}(x) = \sum_{i=1}^n \l_i \hat{f}(p_i) =     \sum_{j=1}^m \mu_j \hat{f}(q_j)\,.      
	\]    
	In other words, 
	\[   f(\sum_{i=1}^m \l_i\f(p_i))= f(\sum_{j=1}^m \mu_j \mu(q_j))\,.\]                        
	By the Hahn Banach Theorem  and the fact that dual of $N$ is spanned by positive functionals,      we infer that 
	\[    \sum_{i=1}^m \l_i\f(p_i)=       \sum_{j=1}^m \mu_j \f(q_j)\,.                \]
	This allows us  to define a linear map \[T: L(M)\to L(N): \sum_{i=1}^n \l_ip_i\to \sum_{i=1}^m \l_i\, \mu(p_i)\,.        \] 
	
	We shall show that this map is positive. For a contradiction suppose that $\sum_{i=1}^n \l_ip_i$ is positive while $\sum_{i=1}^n \l_i\f(p_i)$ is not positive.  In this case there is a positive functional $f$ on $N$ \st{} $\sum_{i=1}^n \l_i f(\mu(p_i))<0$. Therefore $\hat{f}$ is a positive functional on $M$ with 
	$\hat{f}   (\sum_{i=1}^n \l_ip_i)<0$,  but this is a contradiction. Positivity of  $T$ implies its boundedness. Indeed, it follows from the inequality $\nor{ Tx}\le \nor {T(1)}$ whenever $x\in L(M)$ is a positive  element with norm less then one.             Consequently, $T$ can be extended   to a bounded linear map  (denoted again by $T$) from $M$ to $N$. Finally,  as $T$ preserves projections,  it is  a Jordan homomorphism. This fact was shown in Theorem A.4 in \cite{Molnar} for \vNa s,  but the proof for JBW$^*$-algebras  is the same. 
	
\end{proof}

Now we shall consider bijective variant  of the previous result, which is a  direct generalization of Dye's theorem for \vNa s.

\begin{theo}\label{Dye-Jordan injective}
	Let $A$ be a JBW$^{*}$-algebra \st{} $H(A)$  does not contain  any Type $I_2$ direct summand  and  $B$ is  another JBW$^*$-algebra. Suppose that $\f: P(A)\to P(B)$  is an orthoisomorphism. Then $\f$ extends to a Jordan  *-isomorphism $J:A\to B$.  
\end{theo}

\begin{proof}
	By \theoref{Dye-Jordan algebras},  
	$\f$ extends to a Jordan $*$-homomorphism $J:A\to B$.   The image $J(P(A))$ contains $P(B)$.
	Since any image of a Jordan*-homomorphism of a JB$^*$-algebra is closed and the closed linear   span of $P(B)$ is dense in $B$,  we have that $J(A)=B$. 	
	Let us demonstrate that  $J$ is injective. We know that
	$\Ker J$ is a JB$^*$-algebra and so it is linearly  generated by positive elements. So if $\Ker J\not=\{0\}$,  then there is a  positive norm one element $x\in J$. Now we can continue in the same way as  in the proof of   \cite[Theorem 8.1.2 p. 256]{qmt}. By the spectral theory we can write  
	\[  x= \sum_n \dfrac{1		}{2^n}\, p_n\,,                        \]
	where $p_n$'s are mutually commuting projections in $H(A)$. As $0=J(x)\ge \frac 1{2^n} J(p_n)$, we have that  $J(p_n)=0$ for all $n$. However, at leat one $p_n$ has to be nonzero. This is a contradiction with injectivity of $\f$ on the projection lattice.

\end{proof}

\section{Orthogonality and order for tripotents}

We shall recall a few standard definitions. Let $E$ be a JB$^*$-triple. By $U(E)$ we shall denote the set of all tripotents of $E$, that is the set of all elements $u\in E$ for which $u=\J uuu$. This is always nonempty subset as zero is  a tripotent. In case when $E$ is a JB$^*$-algebra,  then  tripotents are just partial isometries, that is  elements $u$ with $u=uu^*u$.  Then $p_i(u)=u^*u$ and $p_f(u)= uu^*$ are projections, called initial and final projection, respectively. 
We shall be mainly interested in the following orthogonality relation on $U(E)$. 

\begin{df} Let $E$ be a JB$^*$-triple.
	Two tripotents  $e,f\in E$ are orthogonal if 
	\[ L(e,f) = 0\,.       \]
\end{df}

In the next  proposition we shall gather basic simple characterizations of orthogonality of tripotents that we shall use in the sequel without further  comments (see e.g. Lemma~2.1 in \cite{HKP}).    

\begin{pr}\label{E4E}
	Let $E$ be a JB$^*$ triple, and let $e,f$ be tripotents in $E$. Then the following assertions are equivalent:
	\begin{enumerate}
		\item $e\perp f$
		\item $f\perp e$
		\item $e\in E_0(f)$
		\item $E_2(e)\subset E_0(f)$
		\item $\J eef=0$
		\item Both $e+f$ and $e-f$ are tripotents. 
	\end{enumerate}

\end{pr}

If $p$ and $q$ are projections in a JB$^*$-algebra $A$, then $p$ and $q$ are orthogonal as projections ($p\circ q=0$) if and only if they are orthogonal as tripotents. In case of J$^*$-algebras orthogonality is equivalent to pairwise orthogonality of initial and final projections. This relation is known under the name double orthogonality.

\begin{pr}\label{double orthogonality}
	Two tripotents $u$ and $v$ in a unital J$^*$-algebra $A$ are orthogonal if and only if they have orthogonal initial and final projections, that is
	\[         v^*u=uv^*=0\,.\]
\end{pr} 

\begin{proof}
	Suppose that $v\perp u$. This is equivalent to 
	\[   \J vvu=        0\,.  \]  
	Therefore 
	\begin{equation}\label{E1}  vv^*u+uv^*v =0\,.\end{equation}
	By multiplying from the left by $v^*$ we obtain
	\[ 0= v^*vv^*u+v^*uv^*v = v^*u+ v^*uv^*v= v^*u[1+v^*v]       \]
	
	However,  as $v^*v$ is a projection and therefore positive element,  we have that 
	$1+v^*v$ is invertible and so $v^*u=0$ by the previous identity. Multiplying now the identity \eqref{E1} by $v^*$ from the right,  we arrive similarly to 
	\[ [vv^*+1]uv^*=0  \]
	which gives $uv^*= 0$ in the same way as above. The reverse implication is obvious.  
\end{proof}

Now we shall define key order considered in this paper. 

\begin{df}
	Let $E$ be a JB$^*$-triple, and let 
	$e,f\in E$ be tripotents. We say that $e$ is less then  $f$, written $e\le f$,  if $f-e$ is a tripotent orthogonal to $e$. 
\end{df}

We gather a few characterizations of the order relation that we shall use frequently. 

\begin{pr}\label{order}
	Let $E$ be a JB$^*$-triple, and let $u,v$ be tripotents in $E$. The following assertions are equivalent (see e.g Proposition 2.4 in \cite{HKP}): 
	
	\begin{enumerate}
		\item $u\le v$
		\item $u=\J uvu$
		\item $u=\J uuv$
		\item $u$ is a projection in  $E(v)$
		\item $E(u)$ is a JB$^*$-subalgebra of $E(v)$. 
	\end{enumerate}
	
\end{pr}

Having a JB$^*$-triple $E$ we shall always consider  $U(E)$ the set of all tripotents in $E$ as a poset with  order defined above. It is a generalized orthomodular poset, where local orthocomplementation in interval $[0,e]$ is given by

\[ f^{\perp_e}=e-f\,.                     \]   

It can be observed immediately that $e\perp f$ for two tripotents $e,f$ exactly when  $e\le u-f=  f^{\perp_u}$  in each interval $[0,u]$ containing $e$ and $f$. (Such interval exists because $e+f$ is supremum $e\lor f$.)  In other words, orthogonality is induced  by order an local orthocomplementation.\\ 

Let $E$ be a JBW$^*$-triple. It is a consequence of  Proposition~\ref{order} that any interval 
$[0, e]$ in $U(E)$   is order isomorphic to the projection poset $P(E(e))$, which is  an orthomodular poset.  Moreover if $e,f$ are orthogonal tripotents and $w$ is a tripotent $w\ge e,f$,  then $e$ and $f$ become orthogonal projections in $E(w)$. (Especially this holds for $w=e+f$.) Therefore the poset $U(E)$ can be seen  as pasting orthomodular posets that are isomorphic to projection posets of JB$^*$-algebras. \\

If $A$  is a JBW$^*$-algebra,  then $P(A)$ is a complete lattice. This is far from being  true in case of tripotent order structures. The reason  is that this poset is  not  upward directed in a typical situation. To see it,  let us recall that maximal elements in $U(E)$, where $E$ is a nonzero JBW$^*$ triple,   are precisely complete tripotents,  that is,  extreme points of the unit ball. If $\dim E\ge 2$, there must be  two different maximal tripotents $u$ and $v$.  It is then straightforward to conclude that there is no upper bound of the set $\{u,v   \}$.\\

The relation of orthogonality for tripotents is an orthogonality relation in the sense of our Definition~\ref{kometa}. Further,  tripotent poset  $U(E)$ is an example of generalized quantum logic. Even if it is not a lattice,  a deep analysis given by Edward and R\"{u}ttimann in \cite{Edwards-Rut} showed that $U(E)$ is a conditionally complete lattice.  More specifically,  they showed  that  there is an order anti-isomorphisms between $U(E)$ and the set of nonempty weak$^*$ closed faces of the unit ball of $E$  ordered by set inclusion. This antiisomorhism  is  given by the map
\[  e\in U(E)\to e+B_1(E_0(e))\,.             \]  
In this light,  let us look at the lattice operation. The supremum of two elements $e,f\in U(E)$ exists if and only if the intersection of the faces $e+B_1(E_0(e))$ and  $f+B_1(E_0(e))$ is nonempty.  However, it may easily  happen that this intersection is empty. For example,  one can take two distinct extreme points $f, g$ of the unit ball of $E$ (they correspond to  maximal tripotents) and consider singleton faces $\{e\}, \{f\}$. In contrast to this, given two weak$^*$ closed  nonempty faces $E$ and $F$,  there is always their suremum,  namely  the smallest weak$^*$ closed face contains $F\cup  G$. It means that infimum $e\land  f$  in $E(U)$ always exists and corresponds to a weak$^*$-closed face generated by two faces.    \\

As  an illustration of  the triple order,  let us consider a \vNa{} $M$. Then $U(M)$ is the set of all partial isometries and the order relation is given by

\[  u\le v \text{ if and only if }     u=uv^*u=uu^*v\,.            \] 

All unitaries are maximal tripotents, however  there might be non-unitary maximal tripotents in case of infinite algebras (see \cite{HKP} for deeper analysis of this phenomenon.) \\

The following description of triple order was given in Proposition {4.6} in \cite{HKP}.\\

\begin{pr}\label{order in vNa}
	
	Let $u,v$ be partial isometries in a \vNa 
	{} $M$. The following assertions are equivalent:
	
	\begin{enumerate}
		\item $u\le v$
		\item There is a unique projection $p\le p_f(v)$ \st{} $u=pv$.
		\item  There is a unique projection $q\le p_i(v)$ \st{} $u=vq$.
	\end{enumerate}
\end{pr}

This implies that the interval $[0, v]$ in $U(E)$ is order isomorphic to interval $[0, p_f(v)]$ (and $[0, p_i(v)]$)
in the projection lattice $P(M)$.

Let $M$ be a \vNa{} acting on a \HS{} $H$. Suppose $u, w\in U(M)$. In order  to understand better the way how intervals in $U(M)$ may overlap,  we   shall describe $I=[0,u]\cap [0,w]= [0, u\land w]$.  This gives  a  spacial description of infima that is not referring to facial structure of the unit ball as Edwards and R\"{u}ttimann did in \cite{Edwards-Rut}.      By \prref{order in vNa}  we see that a tripotent $t$ is in this intersection if and only if
\[ t= pu=qw\,,     \] 
where $p$ is a projection under $p_f(u)$ and $q$ is a projection under $p_f(w)$. Multiplying the previous equation by $u^*$  from the right,   and using the fact $p_f(u)=uu^*\ge p$, we have  
\begin{equation}\label{Eheart}
p=puu^*=qwu^*\,.           
\end{equation}
However, this means that range of $p$ is contained in the range of $q$ and so $p\le q$.  By  symmetry argument  $p=q$. Therefore we have,
\[  I= \set{pu}{p\le p_f(u)\land p_f(w), pu=pw.  }     \] 

In other words, infimum $u\land w$ is of the form 
\[ u\land w = hu=hw \,,
\]
where $h= \sup\set{p\in P(M)}{p\le p_f(u)\land p_f(w), pu=pw}$.\\

This is an expression for infima in terms of the projection lattice. Let us now explore special geometric meaning of elements in $I$. Take $p\le p_f(u)$ with $pu=pw$.      By  (\ref{Eheart}) we have that

\begin{equation}\label{*E*}  p= pwu^*\,.
\end{equation}

This implies that $wu^*$  restricts to identity on $p(H)$. Indeed, let us take $\xi\in H$ with $p\xi=\xi$. 
By (\ref{*E*}) we have that
\[ \xi=pwu^* \xi\,.     \]
Suppose that $wu^*\xi\not= pwu^*\xi$. Then, obviously   
\[  \nor{pwu^*\xi}<\nor{wu^*\xi}\le \nor \xi\,.                 \]
This is a contradiction.       
Therefore $p(H)$ is an invariant subspace of $wu^*$ and this map is identity on it. By the same arguments,  $p(H)$ is  invariant for $uw^*=   (wu^*)^*$ and so $p(H)^\perp$ is also invariant for $uw^*$.  
So we obtain the following orthogonal decomposition: 

\[  uw^*= \text{identity on }p(H)\oplus \text{ some contraction on } (1-p)(H)\,.              \]


Now we turn to morphisms between tripotent posets.


\begin{exm}\label{ex1}
	Let $\Phi: E\to F$ be a Jordan triple homomorphism between JB$^*$-triples $E$ and $F$.  Then $\Phi$ preserves tripotents and restricts to  a quantum logic morphism  $\f: U(E)\to U(F)$.   
	
\end{exm}

The proof of this statement is straightforward. In the opposite direction Jordan triple homomorphisms  between JBW$^*$-triples can be characterised as  linear maps  preserving  tripotents. This is the content of the following proposition. We expect this fact to be known but we give the argument for the sake of completeness. \\

\begin{pr}\label{E*1} Let $E$ and $F$ be JBW$^*$- triples. 
	Let $\Phi: E\to F$ be a bounded  linear map  preserving tripotents. Then $\Phi$ is a Jordan triple homomorphism.
	
\end{pr}

\begin{proof}

	First we show that  $\Phi$ restricts to a quantum logic morphism  between $U(E)$ and $U(F)$.   Let $e$ and $f$ be orthogonal tripotents in $E$. Then $e+f$ and $e-f$ are tripotents in $E$, implying that $\Phi(e)+\Phi(f)$ and  $\Phi(e)-\Phi(f)$ are tripotents in $F$. By \prref{E4E} again we have that $\Phi(e)$ and $\Phi(f)$ are orthogonal. Therefore, $\Phi$ preserves  orthogonality of tripotents. This is enough for showing that $\Phi$ is a triple homomorphism. Indeed,  in view of polarization identities  (see e.g \cite{Chu})  it suffices to show that $\Phi$ preserves cubic powers. Let us take  take an element $x$ and try to show that 
	\[  \Phi(x^3)  =  \Phi(x)^3\,.       \]       
	By the spectral theorem and continuity of $\Phi$ we can suppose that \[ x= \sum_{i=1}^n \l_i e_i \,,   \] 
	where \seq en{} are orthogonal tripotents and $\seq \l n\in \cc$ . As $\Phi$ preserves orthogonality of tripotents we have that 
	\[    \J{\Phi(e_i)}{\Phi(e_j)}{\Phi(e_k)} =0                \]
	whenever $\{i,j,k\}$ is not singleton. Based on it we can compute
	\begin{multline*}  \Phi(x^3)=\Phi\biggl(\sum_{i=1}^n \l_i\overline{\l_i}\l_i  e_i\biggr) = \sum_{i=1}^n \l_i\overline{\l_i}\l_i  \Phi(e_i) =\\
	= \sum_{i,j,k=1}^n \l_i \overline{\l_j}\l_k \J{\Phi(e_i)}{\Phi(e_j)}{\Phi(e_k)}= \sum_{i=1}^n \l_i\overline{\l_i}{\l_i}\, 
	\J{\Phi(e_i)}   {\Phi(e_i)}  {\Phi(e_i)} =       \Phi(x)^3\,.                      
	\end{multline*}

\end{proof}

Therefore, in case of linear maps the situation  is clear. However, in the context of JBW$^*$-triplets duality between triple morphisms and Jordan triple morphisms breaks  down. Indeed,  next series of examples shows that order automorhisms  of tripotent poset may not be coming from restrictions of  linear maps. 
Also we demonstrate that the relationship between orthogonality and order is more delicate  for tripotents than for projections in  Jordan algebras.

\begin{exm}\label{examples}
	\vspace{2mm}

	\begin{enumerate}
		{\rm	
			\item   {\em Orthoisomorphisms not extendable to a homogeneous map:} \\
			
			Let $A$ be a JB$^*$-algebra. Then the star operation $x\to x^*$ is a bijection that preserves Jordan product,   and so its  triple product as well.  Therefore,  when restricted to $U(A)$ and taking into account  Proposition~\ref{order} and Proposition~\ref{order in vNa},  we obtain an orthoisomorphism and order isomorphism that is not extendable to any linear map. 
			
			\item {\em Orthoisomorphism not extendable to an additive map:} \\
			
			Let $M$ be a \vNa with $\dim E\ge 3$. Let $\mathbb{T}$ be the unit circle in $\cc$.  Let us introduce equivalence relation on $U(M)$ by putting $u\sim v$ if there is a complex unit $\l$ such that $u=\l v$.   	Choose an arbitrary map  $T : U(E)/\sim\,  \to \mathbb T$. Let $S: U(E)/\sim\,  \to U(E)$ be a selection function. Finally, define a   map $\f: U(M)\to U(M)$ by  $\f( \l S([u]))=  \l T([u]) S([u])$, $u\in U(E)$,  $\l\in \mathbb T$. Then $\f$  is a bijection for which $\f(S([u]) = T([u])S([u])$. Moreover,  \f{} preserves orthogonality and order in both directions. Indeed, let $u\perp v$ in  $U(E)$, or equivalently $\J uuv =0$.  There are complex units $\l$ and $\mu$ \st{}  $\f(u)=\l u$ and $\f(v)=\mu v$. Then 
			\[  \J {\f(u)}{\f(u)}{\f(v)}= \l\overline {\l} \mu \J uuv=0\,.          \]  
			Similarly,  it can be verified that \f{} preserves orthogonality  in the opposite direction. We can now specify the map $\f$ so that it has no extension to any additive map  acting on $M$. Indeed, take two nonzero orthogonal tripotents $u$ and $v$ in $M$. Modify  parameters in definition of \f{} so that $\f(u)= -u$ and $\f(v)=v$. There is a $\l\in \mathbb T$ \st{} $\f(u+v)=\l(u+v)$
			Since $\f(u)+\f(v)=v-u$,  we can see that $ \f(u+v)\not=\f(u)+\f(v)$. So \f{} cannot have an additive extension over $M$. Therefore there are orthoisomorphisms of $U(M)$ which cannot be extended to any additive map from $M$ to $M$. \\
			
			\item {\em Orthoisomorphism that is not preserving the order:   }\\
			
			We shall show  that there is a tripotent orthoisomorphism that is not preserving the order. 	
			This cannot happen in the projection poset of JB$^*$-algebras (see  \prref{poset-1}).       We shall use the preceding  example (ii). Let us take linearly independent  tripotents $u$ and $v$ in $M$ with $u\le v$. We can certainly choose the map $\f$ above  so that $\f(u)=u$ and $\f(v)=-v$. Then $u=\J uvu$. But   $\J {\f(u)}{\f(v)}{\f(u)}= -u$ and so $\f(u)$ is not underneath $\f(v)$ as $u=\f(u)\not=   \J {\f(u)}{\f(v)}{\f(u)}$.\\
			
			There is another example showing considerable nonlinearity of tripotent orthoisomorphisms.

			Let $M=B(H)_a$,  where $\dim H = 5$. By a rank, $d(u)$,  of a tripotent $u$ in $M$   we mean  dimension of it initial (and so final) projection.  First we observe that if $u\in M$  is a nonzero tripotent, then the following cases may occur (see e.g.  \cite{HKP}):  either $d(u)=2$  or $d(u)=4$.  In the former  case $u$ is minimal.  Indeed, let $e\le u$ be a nozero tripotent. Then $p_f(e)$ and $p_f(u-e)$ are orthogonal projections underneath $p_f(u)$ and with even dimensions. This immediately implies that $e=u$. In the latter case  we infer in a similar way that $u$ is a maximal tripotent.   
			Let us now consider a bijection $\f: U(M)\to U(M)$ that is fixing  tripotents with rank two  and preserves zero and tripotents of rank 4.  As nontrivial tripotents are orthogonal only if they have rank two, we can see that $\f$ is an orthoisomorphism. Let us now fix tripotents $u\le v$ such that $\dim p_i(u)=2$  and $\dim p_i(v)=4$. We can further specify $\f$ to send $v$ to a tripotent whose initial projection is not above $p_i(u)$. Then $\f(v)$ is not above $\f(u)$ and therefore $\f$ is not order preserving.

			\item {\em Orthoisomorphism on a regular JBW$^*$-triple having no linear extension.  }\\
			
			Let us now consider $M=B(H)_a$, where $\dim H=3$. Let $u$ be a nonzero tripotent of $M$. Then its rank  must be two.  Moreover, tripotents $u$ and $v$ are orthogonal if and only if at least one of them is zero.   Therefore, any bijection $\Phi$ acting on  $U(M)$ fixing zero is an orthoisomorphism. Of course, $\f$ may not have any  linear extension to $M$. On  the other hand,    $M$ is regular.  To see it we  consider a homotope $M(u)$ where $u$ is a  maximal tripotent  (i.e. $u$ has rank two). Simultaneously $u$ is an atom and so $E(u)$ is isomorphic to \cc.  
			For this reason $E(u)$ cannot contain any Type $I_2$ direct summand and so $M$ is regular. 
			
		}

	\end{enumerate}
\end{exm}

\section{Description of quantum morphisms  - consistent systems of Jordan maps}

\begin{df}
	Let $E$ and  $F$ be  JB$^*$-triples, and let $\f: U(E)\to U(F)$ be a map.     The system of maps ${\mathbf \Phi}(\f)= (\Phi_u)_{u\in U(E)}$ is a consistent system of Jordan *-homomorphisms corresponding to \f{}  if  the following conditions are satisfied:
	\begin{enumerate}
		\item  Each  $\Phi_u: E(u)\to F({\f(u)})$ is a unital Jordan *-homomorphism between algebras $E(u)$ and $F({\f(u)})$. 
		\item If $u\le v$ in $U(E)$ then $\Phi_u$ and $\Phi_v$ coincide on $E(u)$.  
	\end{enumerate}

\end{df}

Remark that the map \f{} in the above definition is uniquely determined by the system of maps $(\Phi_u)_{u\in (U(E))}$ as 
\[  \f(u)=\Phi_u(u)\,.            \] 
On the other hand, to a given \f{} there is only one possible consistent system ${\mathbf \Phi}(\f)$.
Indeed, suppose we have two such consistent systems ${\mathbf \Phi(\f)}$ and ${\mathbf \Psi(\f)}$. Consider a tripotent $u\in E$. We know that $\Phi_u$ and  $\Psi_u$ coincide with \f{} on $[0,u]=P(E(u))$. However projections in $E(u)$ span a dense linear subspace. So by continuity $\Phi_u=\Psi_u$.

\begin{pr}\label{E**}
	Let $ {\mathbf \Phi}(\f)$ be a consistent system of Jordan *-homomorphisms  between JB$^*$-triples $E$ and $F$. Then the map \f{} is a quantum logic morphism.
\end{pr}

\begin{proof}
	Let $u$ and $v$ be orthogonal tripotents. Then $w=u+v=u\lor v$ is a tripotent. By the assumption $\Phi_w$ gives $\Phi_u$ and $\Phi_v$ on $E(u)$ and $E(v)$, respectively. Tripotents $u$ and $v$ becomes projections in $E(w)$ and are mapped  to orthogonal projections $\Phi_w(u)=\f(u)$ and $\Phi_w(v)=\f(v)$ as $\Phi_w$ is a Jordan *- homomorphism.    Moreover,  we have 
	\[ \f(u\lor v)=\f(u+v)=\Phi_w(u+v)= \Phi_w(u)+\Phi_w(v)= \f(u)+\f(v)=\f(u)\lor \f(v)\,.                      \]

\end{proof}

\begin{df}
	Let $E$ and  $F$ be  JB$^*$-triples.  The consistent system of Jordan *-homomorphisms ${\mathbf \Phi}(\f)$         is called a consistent system of Jordan *-isomorphisms  if each map $\Phi_u$ is a Jordan *-isomorphism and \f{ } is a bijection.

\end{df}

\begin{pr}
	Let ${\mathbf \Phi}(\f)$ be a consistent system of Jordan *-isomorphisms. Then $\f$ is a quantum logic isomorphism.  
	
\end{pr}

\begin{proof}
	By Proposition~\ref{E**} the corresponding map \f{} is a bijection that is a quantum logic isomorphism. Let us now realize that the system of maps $(\Phi_w^{-1})_{w\in U(F)}$ it is a consistent system of Jordan *-homomorphism corresponding to $\f^{-1}$. For this reason $\f^{-1}$ is a quantum logic morphism as well.


	
\end{proof}

Now we prove that any quantum logic morphism   between structure of tripotents  extends uniquely to a consistent system of Jordan maps.

\begin{theo}\label{extension}
	Let $E$ and $F$ be JBW$^*$-triples, where $E$ is regular. Let  $\f: U(E)\to U(F)$ be a   quantum logic morphism.   Then   there is a unique consistent system of Jordan *-homomorphisms   ${\mathbf \Phi}(\f)=(\Phi_u)_{u\in U(E) } $ corresponding to \f{}.

\end{theo}

\begin{proof}
	We know that \f{} preserves the order.   	Let us fix $u\in U(E)$ and take a complete tripotent $w\in E$ with $w\ge u$.  Then \f{} maps $[0,u]=P(E(u))$ into $[0,\f(u)]= P(E({\f(u)}))$  and $[0,w]=P(E(w))$ into $[0,\f(w)]= P(E({\f(w)})).$ 
	According to Theorem~\ref{Dye-Jordan algebras} there is a unital Jordan *-homomorphism $\Phi_w: E(w)\to F(\phi(w))$. 
	As $E(u)$ is a JB$^*$-subalgebra of $E(w)$, $\Phi_w$ restricts to a Jordan *-homomorphism $\Phi_u$  with domain $E(u)$ whose restriction to $[0,u]$ coincides with  \f{}. Such a map is unique and so does not depend on the choice of $w$. 
	Therefore, we  have that for each $u\in U(E)$ the restricted map
	$\f: [0,u]\to [0,\f(u)])$ extends uniquely to   a unital Jordan $*$-homomorphism   $\Phi_u$   between $E(u)$ and $F({\f(u)})$.  It remains to verify that ${\mathbf{\Phi}}(\f)$  is a consistent system of Jordan *-homomorphisms.
	But this follows immediately from the construction.

\end{proof}

\begin{theo}\label{isomorphism}
	Let $E$ be a regular JBW$^*$-triple and $F$ a JBW$^*$-triple. Let $\f: U(E)\to U(F)$ be a quantum logic  isomorphism. Then there is a unique consistent system ${\mathbf \Phi}(\f)$       of Jordan *-isomorphisms. 
	
\end{theo}

\begin{proof}
	We know that there is a consistent system of Jordan *-homomorphisms   ${\mathbf \Phi}(\f)$  with corresponding function \f. We have to show that each $\Phi_u$ is a Jordan *-isomorphism. Fix $u\in U(E)$. Then 
	the restriction $\f: E(u)\to E(\f(u))$ is a quantum logic isomorphism. So by \theoref{Dye-Jordan injective} we have that $\Phi_u$ is a Jordan *-isomorphisms.
	
\end{proof}

\section{Local  Jordan  morphisms}

In this part we describe morphisms of tripotent posets using  one single map rather than  a family of Jordan maps. It turns  out that  this global map is partially linear  in the sense of definitions below.

\begin{df}
	We say that a set $S$ in a JBW$^*$-triple $E$ is triple bounded if the set $\set{r(s)}{s\in S}$ has  upper bound in $U(E) $.

\end{df}

\begin{df}
	Let $E$ and $F$ be JBW$^*$-triples. Let $J:E\to F$ be a map. We say that $J$ is a local triple Jordan  homomorphism if the following conditions are satisfied. 
	\begin{enumerate}
		\item $J$ is real  homogenous,  i.e. $J(\l x)=\l J(x)$ for all $x\in E$ and $\l\in \rr$.  
		\item $J$ is partially additive in the sense   $J(x+y)= J(x)+J(y)$,   whenever the set $\{x,y\}$ is triple bounded. 
		\item $J$ preserves tripotents.  
	\end{enumerate} 
\end{df}

First we observe that in view of Proposition~\ref{E*1}  any  linear  local Jordan morphism is a Jordan triple homomorphism. Further we see that any local Jordan isomorphim restricts to a quantum logic morphism preserving reflections.

\begin{pr}\label{T1}
	Suppose that $E$ and $F$ are  JBW$^*$-triples.         Let $J:E\to F$ be a local triple Jordan  homomorphism between JBW$^*$-triples $E$ and $F$. Then $J$  restricts to quantum logic morphism $\f: E\to F$ such that $\f(-u)=-\f(u)$ for all $u\in U(E)$.

\end{pr}

\begin{proof}

	Suppose  that $e$ and $f$ are orthogonal tripotents. Tripotent  $e+f$ and $e-f$ is supremum of the set $\{e,f\} $ and $\{e,-f\}$, respectively. Therefore the sets 
	$\{e,f\} $ and $\{e,-f\} $ are triple bounded.    
	By assumption  $J(e+f)=J(e)+J(f)$ and  $J(e-f)=J(e)-J(f)$. As $e\pm f$ are tripotents, we conclude that $J(e)\pm J(f)$ are tripotents as well. This shows that $J(e)$ and $J(f)$ are orthogonal tripotents. Moreover we see that $\f(e\lor f)=\f(e)\lor \f(f)$.  
	The proof is completed.



\end{proof}

In order to prove the opposite statement    we shall need the following auxiliary lemma.

\begin{lem}\label{range projection}
	Let $E$ be a JBW$^*$-triple. Let  $w\in U(E)$.  The following statements hold: 
	\begin{enumerate}
		\item If $x$ is a positive element in $E(w)$, then $r(x)\le w$.
		\item Let $\seq x n \in E$ be such that $r(x_i)\le w$ for all $i=1,\ldots,  n$,  then $r(\sum_{i=1}^n x_i)\le w$. 
	\end{enumerate}
\end{lem}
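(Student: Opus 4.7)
The plan is to treat assertion (1) as essentially immediate from the paper's own definition of the range tripotent, and then to reduce (2) to (1) by collecting all the $x_i$ inside a single ambient JB$^*$-algebra $E(w)$ where positivity is additive.

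For (1), I would appeal directly to the characterization recalled in the introduction: whenever $x$ is positive in $E(u)$, its range tripotent $r(x)$ coincides with the range projection of $x$ computed inside the JB$^*$-algebra $E(u)$. Applied with $u = w$, this makes $r(x)$ a projection of $E(w)$, and by item (4) of \prref{order} the projections of $E(w)$ are precisely the tripotents of $E$ lying below $w$. So $r(x)\le w$, with nothing more to check.

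For (2), my strategy is to promote each $x_i$ to a positive element of the single JB$^*$-algebra $E(w)$, add them there, and then invoke (1). The case $x_i=0$ is trivial, so assume $x_i\neq 0$. By definition of the range tripotent, $x_i$ is positive in $E(r(x_i))$. Since $r(x_i)\le w$ by hypothesis, item (5) of \prref{order} tells me that $E(r(x_i))$ sits inside $E(w)$ as a JB$^*$-subalgebra, i.e.\ the Jordan product $\circ_{r(x_i)}$ and involution $*_{r(x_i)}$ agree with $\circ_w$ and $*_w$ on $E(r(x_i))$. Because positivity in a JB$^*$-algebra is the intrinsic property of being $a\circ a$ for some self-adjoint $a$, and both the self-adjointness and the square are preserved in passing to the larger algebra, it follows that $x_i$ is positive in $E(w)$ as well.

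Once this is done, the conclusion is a one-liner: the positive cone of the JB$^*$-algebra $E(w)$ is closed under finite sums, hence $\sum_{i=1}^n x_i$ is positive in $E(w)$, and applying (1) to this sum yields $r(\sum_{i=1}^n x_i)\le w$. The only subtle step is the transfer of positivity from $E(r(x_i))$ to $E(w)$, and this is the reason the proof must use the full strength of the equivalence (1)$\Leftrightarrow$(5) in \prref{order}, rather than merely the fact that $E(r(x_i))$ sits inside $E(w)$ as a linear subspace.
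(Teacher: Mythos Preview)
Your proof is correct and follows essentially the same route as the paper. Part (2) is identical to the paper's argument: transfer positivity of each $x_i$ from $E(r(x_i))$ to $E(w)$ via \prref{order}(v), add inside the positive cone of $E(w)$, and invoke (1). For part (1) there is only a cosmetic difference: the paper appeals directly to the minimality clause in the definition of $r(x)$ (smallest tripotent $e$ with $x$ positive in $E(e)$), whereas you use the alternative description of $r(x)$ as the range projection in $E(w)$ together with \prref{order}(iv); both are one-line deductions from facts stated in the introduction.
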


\begin{proof}
	(i) Let $x$ be a positive element in $E(w)$.  As $E(w)$ is a JBW$^*$-subtriple of $E$, we have that the range tripotent $r(x)$ belongs to $E(w)$.      However, $r(x)$ is the smallest tripotent such that $x$ is positive in $E(r(x))$. Hence, $r(x)\le w$.         \\
	
	(ii) We have that  $x_i$ is a positive element in $E(r(x_i))$ and so also in $E(w)$ because $E(r(x_i))$ is a *-subalgebra of $E(w)$. For this reason $\sum_{i=1}^n x_i$ is positive in $E(w)$ and (i) applies. \\

\end{proof}

Let us remark that the previous proposition does not hold without assuming positivity of $x$. Indeed, any tripotent in $E(w)$ that is not a projection provides a counterexample.  

\begin{theo}\label{extension local}
	Let $E$ and $F$ be JBW$^*$-triples. Suppose that $E$ is regular.  Let 
	\[  \f :U(E)\to U(F)\,.\] Then the following conditions are equivalent:
	\begin{enumerate}
		\item \f{} is a quantum logic morphism such that $\f(-u)=-\f(u)$ for all $u \in U(E)$.  
		\item 	   There is a local Jordan triple morphism
		\[ \Phi: E\to F\]
		extending \f.
		\item  	There is a  local Jordan triple morphism
		\[ \Phi: E\to F\]
		extending \f{}  such that moreover 
		\[ \Phi\J xyx = \J{\Phi(x)}{\Phi(y)}{\Phi(x)} \,,      \] 
		whenever the set $\{x,y\}$ is triple bounded.
	\end{enumerate}  
Further, if  $\f:E\to F$ is a quantum logic morphism,  then it extends to a partially additive map between $E$ and $F$. 
\end{theo}

.
\begin{proof}
	(i)$\Rightarrow$ (iii).  Let $\f$ be a quantum logic morphism.	By \theoref{extension} we have a consistent system of Jordan *-homomorphisms ${\mathbf\Phi}( \f)$ corresponding to \f. 
	Define now the map $\Phi$  in  the following way: Let $x\in E$,  set 
	\[   \Phi(x)=\Phi_{r(x)}(x)\,.      \]	
	Let us verify that $\Phi$ is a local Jordan  triple morphism.  Let us fix $x\in E$ and $\l\in \rr$. If $\l>0$ then it can be seen readily $r(x)=r(\l x)$. Therefore,
	
	   \[   \Phi(\l x)=\Phi_{r(\l x)}(\l x)=\l \Phi_{r(x)}(x)= \l \Phi(x)\,.         \]
Let us discuss the case  when $\l<0$. It holds that  $r(-x)=-r(x)$. Indeed,  for any tripotent $u\in E$ we have that $E_2(u)=E_2(-u)$ and  $x^{*_u}=x^{*_{-u}}$, $x\circ_u y = - x\circ_{-u} y$ for all $x,y\in E_2(u)$.   This gives that an element $z\in E_2(u)$ is positive in $E(u)$ if and only if $-z$ is positive in $E(-u)$. Therefore $-x$ is positive in $E(-r(x))  	$. On the other hand,  suppose that $-x$ is positive in $E(u)$ for a tripotent $u$. By the above $x$ is positive in $E(-u)$ and so $r(x)\le -u$. Equivalently, $r(x)= \J{r(x)}{-u}{r(x)}$ and so $-r(x)= \J{-r(x)} {u} {-r(x)}$. This means that $-r(x)\le u$. Hence, $r(-x)=-r(x)$. Further,  we shall need the fact that $\Phi_u=\Phi_{-u}$.  Let us remark that by the assumption both $\Phi_u$ and $\Phi_{-u}$  act between $E_2(-u)$ and $E_2(\f(-u))$. Let us take a projection $p\in E(-u)$. Then $-p$ is a projection in $E(u)$. Indeed, the *-operation is the same and so $p$ is self-adjoint in $E(u)$. The idempotency of $p$ in $E(-u)$ means that 
$p = \J p{-u}p$. Then    $(-p)\circ_u (-p)= \J pup = -p$.   Therefore, $-p$ is a projection in $E(u)$. 
In fact, by symmetry, $q$ is a projection in $E(u)$ if and only if $-q$ is projection in $E(-u)$  Let us fix a projection $p\in E(-u)$. Then

\[ \Phi_u(p) = - \Phi_u(-p)=-\f(-p)=  \f(p).                      \]
It means that $\Phi_u$ and $\Phi_{-u}$ coincide on projections in $E(-u)$.  Moreover,  $\Phi_u$  preserves Jordan product in $E(-u)$.  For this, let us  take $x,y\in E(-u)$ and compute
\begin{multline*}  \Phi_u(x\, \circ_{-u}\,  y)= \Phi_u(-x\, \circ_u\,  y)=
 - \Phi_u(x)\, \circ_{\f(u)}\,  \Phi_u(y)= \\ =   \Phi_u(x)\, \circ_{-\f(u)}\,  \Phi_u(y) = \Phi_u(x)\, \circ_{\f(-u)}\,  \Phi_u(y) \,.           \end{multline*}
Since any Jordan *-homomorphism is uniquely deteremined by its value on projections, we have that really $\Phi_u=\Phi_{-u}$. Finally, we can compute,   
\[ \Phi(\l x) = \Phi_{r(\l x)}(\l x)= \Phi_{-r(x)}(\l x)=  \Phi_{r(x)} (\l x)=\l \Phi_{r(x)}(x)= \l \Phi(x)\,.                               \]  We have shown that $\Phi$ is  real homeogeneous.   \\

	
	Let us investigate additivity of $\Phi$.  
	 Take  a triple bounded set $\{x,y\}$. Let $w$ be a tripotent with $w\ge r(x), r(y)$. By Lemma~\ref{range projection} we have $r(x+y)\le w$. Using consistency of the system of Jordan *- homomorphisms   ${\mathbf\Phi}( \f)$    we have 
	
	\begin{multline}
	\Phi(x+y)= \Phi_{r(x+y)}(x+y)=\Phi_w(x+y)=\\ =\Phi_w(x)+\Phi_w(y)= \Phi_{r(x)}(x)+ \Phi_{r(y)}(y)=\Phi(x)+\Phi(y)\,. 
	\end{multline}
	This shows additivity of $\Phi$ on triple bounded sets.\\
	
	Suppose now that $\{x, y\}$ is a triple bounded    set.  That is,  there is a tripotent $w$ with $r(x), r(y)\le w$.     As $x$ and $y$ are positive in $E(r(x))$ and $E(r(y))$, respectively, we can see that $x$ and $y$ are  positive in $E(w)$ as well.    Observe that $U_x(y) = \J xyx$ is positive in $E(w)$. Using Lemma~\ref{range projection} once again, we obtain that $r(\J xyx)\le w$. Now we can compute, using the properties of  a consistent system of Jordan maps, that

	\begin{multline*}      \Phi\J xyx =   \Phi_{r(\J xyx)} \J xyx  =\Phi_w \J xyx = \\ = \J {\Phi_w(x)}   {\Phi_w(y)} {\Phi_w(y)}=\J{\Phi(x)}{\Phi(y)}{\Phi(x)} \,.      \end{multline*}   
	The proof of the present implication is completed. \\

	(ii)$\Rightarrow$(i) follows immediately from \prref{T1} and (iii)$\Rightarrow$ (ii) is trivial.\\
	
	
	The fact that any quantum logic morphism $\f: U(E)\to U(F)$ extends to a partially additive map $\Phi: U(E)\to U(F)$ is contained in the proof of the implication (i)$\Rightarrow$ (iii). \\
	
	The proof is completed.

\end{proof}

\begin{df}
	Let $E$ and $F$ be JBW$^*$-triples.      A map  $J: E\to F$ is a local Jordan triple isomorphism if it is a bijection such that both $J$ and $J^{-1}$ are  local Jordan triple homomorphisms.

\end{df}

\begin{lem}\label{l2}
	Let $J: E\to F$ be a local Jordan triple isomorphism. Then its restriction \f{} to $U(E)$ is a quantum logic isomorphism  between $U(E)$ and $U(F)$ such that $\f(-u) = -\f(u)$ for all $u\in U(E)$.  	
\end{lem}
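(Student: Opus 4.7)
The plan is to reduce everything to \prref{T1} applied to both $J$ and $J^{-1}$. Since $J$ is, by assumption, a local Jordan triple homomorphism, \prref{T1} immediately tells us that its restriction $\f = J|_{U(E)}$ maps $U(E)$ into $U(F)$, is a quantum logic morphism, and satisfies $\f(-u) = -\f(u)$ for all $u \in U(E)$. This already takes care of the reflection-preserving property demanded in the statement, so essentially no additional work is needed for that clause.

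For the isomorphism part I would then apply \prref{T1} a second time, now to $J^{-1}$, which is also a local Jordan triple homomorphism by the definition of local Jordan triple isomorphism. This yields a quantum logic morphism $\psi = J^{-1}|_{U(F)} : U(F) \to U(E)$. What remains is a routine verification that $\psi = \f^{-1}$: since $J$ carries tripotents to tripotents, so does $J^{-1}$, so both $\f$ and $\psi$ are well-defined restrictions; and the identities $J \circ J^{-1} = \mathrm{id}_F$ and $J^{-1} \circ J = \mathrm{id}_E$ restrict to $\f \circ \psi = \mathrm{id}_{U(F)}$ and $\psi \circ \f = \mathrm{id}_{U(E)}$. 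Thus $\f$ is a bijection whose inverse is $\psi$, and both $\f$ and $\f^{-1}$ are quantum logic morphisms, which is precisely what it means for $\f$ to be a quantum logic isomorphism.

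The only minor subtlety worth flagging is that one implicitly uses the fact that $-u \in U(E)$ whenever $u \in U(E)$, which follows immediately from symmetry and real linearity of the triple product in the outer variables together with conjugate linearity in the middle variable: $\{-u,-u,-u\} = -\{u,u,u\} = -u$. Since all the substantive content is already packaged in \prref{T1}, no real obstacle arises; the proof is essentially a bookkeeping exercise combining the two applications of that proposition.
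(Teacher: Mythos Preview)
Your proof is correct and follows the same approach as the paper, which simply invokes \prref{T1} in a single sentence. You have spelled out the details the paper leaves implicit---applying \prref{T1} to both $J$ and $J^{-1}$ and checking that the two restrictions are mutually inverse on tripotents---so there is nothing to add.
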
   	

\begin{proof}
	It follows immediately  from  \prref{T1} that \f{} is a quantum logic isomorphism.

\end{proof}

\begin{theo}\label{Evi2}
	Let $E$ and $F$ be JBW$^*$-triples. Suppose that $E$ and $F$  are  regular. Let  $\f: U(E)\to U(F)$ be a
	quantum logic isomorphism such that $\f(-u)=-\f(u)$ for all $u\in U(E)$. Then \f{} extends to a local 
  	\end{theo}

  	\begin{proof}
  		Let us denote by ${\mathbf \Phi}$ and $\mathbf{\Psi}$ local Jordan triple homomorphisms corresponding to $\f$ and $\f^{-1}$ respectively, as constructed in the proof of \theoref{extension local}. We shall prove that  they are mutually inverse maps. 
  		For this let us take $x\in E$ and consider 
  		
  		 \[  y=\Phi(x)=\Phi_{r(x)}(x)\,.             \]

     We know that $\Phi_{r(x)}$ is a Jordan *-isomorphism from $E(r(x))$ onto $E(\f(r(x)))$ whose inverse is the map $\Psi_{\f(r(x))}$. As $x$ is positive in $E(r(x))$,  we can see that  $y=\Phi(x)$ is positive in $E(\f(r(x))$. Hence, $r(y)\le \f(r(x))$. 
      Now we can compute
      \[ \Psi(y)=  \Psi_{\f(r(x))}(y)= \Phi_{r(x)}^{-1}(y)= x\,.                    \] 
  		So we have established that 
      $\Psi\circ \Phi$ is identity on $E$. By the symmetry we have that $\Phi$ and $\Psi$ are really mutually inverse maps. Now by \theoref{extension local} $\Phi$ is a local Jordan triple  isomorphism.                        		
  		

  	\end{proof}

  	
  		
  	
  	\section{Tripotent posets as invariants}

  	   If we have two JB$^*$-triples $E$ and $F$ that are Jordan triple isomorphic,  then  the tripotent structures $U(E)$ and $U(F)$ are isomorphic as generalized quantum logics. This holds because of the fact that Jordan triple isomorphism implements quantum logic isomorphism.  Therefore the tripotent poset is invariant in the theory of Jordan triples.    
  	     We know that morphism between tripotent structures is not extendable to a Jordan triple morphism in all cases.  However, it does not exclude that tripotent structure determines the triple structure itself. It happens in case of projection lattices. Indeed, even if  Dye's theorem does not hold for type $I_2$ \vNa s,  we  still have that such algebras are Jordan *-isomorphic    if and only if their projection lattices are orthoisomorphic.  We  have  the following result proved in \cite[Cor. 9.2.9, p. 193]{Lindenhovius} and \cite[Theorem 2.3]{IQSA08} 
  	      
  	   \begin{pr}\label{E111}
  	   	Let $M$ and $N$ be \vNa s.  Then the following conditions are equivalent:
  	   	\begin{enumerate}
  	   		\item $P(M)$ and $P(N)$ are orthoisomorphic.
  	   		\item $M$ and $N$ are Jordan isomorphic (that is there is a  Jordan *-isomorphism between $M$ and $N$).
  	   		 	   	\end{enumerate}
     		 	   	\end{pr}
  	   	
  	  Therefore, projection lattice with orthogonality relation  is a complete Jordan invariant for \vNa s.      It is natural to ask whether the same holds for tripotent posets.  We shall give an affirmative answer below. Let us first state auxiliary facts.  
  	  The following one  is well known and we state it for the sake of completeness.  
  	  
  	  \begin{lem}\label{L1}
  	  	Let $A$ be a J$^*$-algebra in  $B(H)$. Suppose that $v\in U(A)$. Then $A(v)$ is triple isomorphic to $A(p_i(v))$.
  	  	 	  	
  	  	\end{lem}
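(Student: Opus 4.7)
The plan is to exhibit an explicit triple isomorphism $\phi\colon A(v) \to A(p_i(v))$ implemented by left multiplication by $v^*$, that is, $\phi(x) = v^*x$, with candidate inverse $\phi^{-1}(y) = vy$.

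First I would check that $\phi$ is a well-defined linear bijection. The partial isometry relation $vv^*v = v$ yields $v^* = v^*p_f(v) = p_i(v)v^*$, so for $x \in A(v) = p_f(v)Ap_i(v)$ we get $v^*x = p_i(v)v^*xp_i(v)$, which sits in $p_i(v)Ap_i(v) = A(p_i(v))$. Bijectivity follows by direct computation: $v(v^*x) = p_f(v)x = x$ for $x \in A(v)$, and $v^*(vy) = p_i(v)y = y$ for $y \in A(p_i(v))$.

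The main step is preservation of the triple product $\J{a}{b}{c} = \frac{1}{2}(ab^*c + cb^*a)$ inherited from $B(H)$. Expanding directly,
\[ \J{\phi(x)}{\phi(y)}{\phi(z)} = \tfrac{1}{2}\bigl(v^*xy^*vv^*z + v^*zy^*vv^*x\bigr). \]
The crucial simplification is $y^*vv^* = y^*$ for $y \in A_2(v)$, which follows by taking adjoints in $y = p_f(v)y = vv^*y$. This collapses the right-hand side to $v^*\cdot \frac{1}{2}(xy^*z + zy^*x) = \phi(\J{x}{y}{z})$, giving the desired compatibility.

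The only subtle point is the interpretation of $A(p_i(v))$ when $p_i(v)$ need not lie in $A$; but in the setting relevant to \prref{E111} (where $A$ is a *-algebra, so $v^*$ and $p_i(v)$ automatically belong to $A$), the identification $A(p_i(v)) = p_i(v)Ap_i(v)$ is unambiguous and $\phi$ takes values in $A$. Linearity and continuity are immediate from the definition.
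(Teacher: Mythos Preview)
Your proof is correct and follows exactly the same route as the paper: the paper also defines the isomorphism by $x\mapsto v^*x$ and simply asserts that a direct computation verifies it is a Jordan triple isomorphism, whereas you actually carry out that computation and exhibit the inverse $y\mapsto vy$. Your observation about the potential ambiguity of $A(p_i(v))$ when $A$ is a general J$^*$-algebra (so that $v^*$ and $p_i(v)$ need not lie in $A$) is a fair caveat that the paper glosses over; as you note, in the only place the lemma is invoked, $A$ is a unital \Ca{} and the issue does not arise.
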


    	\begin{proof}
    		We know that $A(v)=p_f(v)Ap_i(v)$ and $A(p_i(v)) = p_i(v)Ap_i(v)$. It can be easily  verified by a direct computation  that the map 
    		\[  x\in A(v) \to   v^* x\in A(p_i(v))                   \]  
    		is a Jordan triple isomorphism.

    	\end{proof}

  	  We  show that for unital \Ca s   the structure of tripotents determines the structure of projections. It is based on the following lemma.
  	  
  	  \begin{lem} \label{Ev1}
  	  	Let $A$ be a unital \Ca{} and $u$ a complete tripotent in $A$. Then $P(A)$ is quantum logic isomorphic to  the   interval $[0,u]$ in $U(A)$.  
  	  	  	  \end{lem}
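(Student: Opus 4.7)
The plan is to realise $P(A) \cong [0,u]$ as a composition of three quantum-logic isomorphisms, peeling off structure step by step.

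Write $p = p_f(u)$ and $q = p_i(u)$. By \prref{order}(iv), an element of $U(A)$ lies below $u$ precisely when it is a projection of the unital JB$^*$-algebra $A(u) = pAq$ (whose unit is $u$); this identification respects zero, orthogonality and orthogonal suprema, so $[0,u] = P(A(u))$ as quantum logics. By \lemref{L1}, the map $x \mapsto u^{*}x$ is a unital triple isomorphism $A(u) \to A(q) = qAq$, where the latter is the corner \Ca{} regarded as a unital JB$^*$-algebra with unit $q$; restriction to projections furnishes a QL-isomorphism $P(A(u)) \cong P(qAq) = [0,q]$, the principal ideal below $q$ in $P(A)$.

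It remains to produce a QL-isomorphism $P(A) \cong [0,q]$, and this is where completeness of $u$ enters. From $A_0(u) = (1-p)A(1-q) = 0$, specialising $x = 1$, one gets $(1-p)(1-q) = 0$; hence $p$ and $q$ commute in $A$ and $p \vee q = 1$, and taking adjoints in $A_0(u) = 0$ also gives $(1-q)A(1-p) = 0$. Setting $e_1 = pq$, $e_2 = 1-q$, $e_3 = 1-p$ produces three pairwise-orthogonal projections summing to $1$, and both off-diagonal blocks $e_2 A e_3$ and $e_3 A e_2$ vanish in the resulting matrix decomposition of $A$. Expanding the identities $uu^* = p$ and $u^*u = q$ in this block form yields algebraic relations among the components of $u$ from which one can assemble a partial isometry $w \in A$ with $w^*w = 1$ and $ww^* = q$, realising a Murray--von Neumann equivalence $1 \sim q$ in $A$. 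Conjugation $x \mapsto w^{*}xw$ is then a unital $*$-isomorphism $A \to qAq$ sending $1$ to $q$, whose restriction to projections is the desired QL-isomorphism $P(A) \cong [0,q]$.

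Composing the three QL-isomorphisms gives $P(A) \cong [0,u]$. The main technical obstacle is the third step, specifically the construction of $w$: one cannot simply extend $u^{*}$ to an isometry by adding an orthogonal tripotent, because any tripotent-orthogonal extension would be forced to live in the vanished block $e_2 A e_3$; instead $w$ must be assembled non-orthogonally from $u^{*}$ together with auxiliary partial isometries living in the corners $e_i A e_j$, and it is precisely the combined vanishing of the two off-diagonal Peirce blocks that guarantees these components can be reassembled consistently.
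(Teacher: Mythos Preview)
Your first two steps are fine and track the paper's use of \prref{order} and \lemref{L1}. The third step, however, is where the plan breaks down. You assert that the block relations coming from $uu^*=p$ and $u^*u=q$ let you ``assemble'' a partial isometry $w\in A$ with $w^*w=1$ and $ww^*=q$, but you never actually produce $w$, and there is a structural reason to doubt that such an element exists in general. If it did, then in the Murray--von Neumann semigroup $V(A)$ one would have $[e_1]+[e_2]+[e_3]=[e_1]+[e_3]$; in any algebra where $V(A)$ is cancellative (for instance stable rank one) this forces $e_2=0$, yet your very set-up allows $e_2=1-q\neq 0$. More concretely, a faithful $*$-homomorphism always satisfies $\pi(u)^*\pi(u)=\pi(u^*u)=\pi(q)$, so no conjugation by an element of $A$ can turn $q$ into $1$ while remaining injective when $q\neq 1$. (Incidentally, with your conventions the conjugation should be $x\mapsto wxw^*$, not $x\mapsto w^*xw$.) Your final paragraph shows you sensed the obstruction---the orthogonal extension of $u^*$ is blocked because $e_2Ae_3=0$---but a ``non-orthogonal assembly'' does not circumvent the K-theoretic obstacle.

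The paper sidesteps this entirely. Instead of working inside $A$, it invokes Lemma~6.1 of \cite{HKPP} to obtain an isometric unital \emph{Jordan} $*$-homomorphism $\psi\colon A\to B(H)$ with $\psi(u)^*\psi(u)=1$. The point is that a Jordan $*$-map need not preserve the associative product, so $\psi(u)^*\psi(u)$ has no reason to equal $\psi(u^*u)$; this is exactly what lets one arrange $p_i(\psi(u))=1$ while keeping $\psi$ faithful. After replacing $A$ by the Jordan-isomorphic copy $\psi(A)$ one has $q=1$, and then your steps 1--2 (via \lemref{L1}) finish the argument immediately. So the missing ingredient is not a clever recombination of the blocks of $u$, but the passage to a Jordan representation in which the initial projection becomes trivial.
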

  	  
   	  \begin{proof}
   	  By Lemma~6.1 in \cite{HKPP} there is a \HS{} $H$ and an isometric  unital Jordan *- homomorphism $\psi: A\to B(H)$ \st{} $\psi(u)^*\psi(u)=1$. Therefore
       we can replace $A$ by JC$^*$-algebra $\psi(A)$ that is Jordan *-isomorphic to $A$.  	  
   	   This way  we can suppose that $p_i(u)=1$. By \lemref{L1} there si a unital triple isomorphism  between      $A(u)$ and $A(p_i(u))=A(1)=A$. Therefore $A(u)$ and $A(1)$ are  isomorphic as JB$^*$-algebras. Consequently, $P(A)=[0,1]$ is quantum logic isomorphic   to $[0,u]$ in $A(u)$. The proof is completed.

   	  \end{proof}

  	    \begin{pr}\label{Nov1}
  	    	Let $A$ and $B$ be unital \Ca s such that $U(A)$ and $U(B)$ are quantum logic isomorphic. Then  $P(A)$ and $P(B)$ are quantum logic isomorphic.    	    \end{pr} 
  	  
  	  \begin{proof}
  	  	Let $\f: U(A)\to U(B)$ be an orthoisomorphism. 
  	  	Then $u=\f(1)$ is  a complete tripotent in $B$.
  	  	  	  	 Indeed, if it is not true, then there is a nonzero tripotent $h$ in $B$ orthogonal to $\f(1)$. Then the preimage $\f^{-1}(h)$ is a nonzero tripotent orthogonal to 1, which is not possible. Now \f{} restricts to an orthoisomorphism between $[0,1]$ and $[0,u].$  These posets are quantum logic isomorphic  to 
  	  	$P(A)$ and $P(B)$,  respectively by \lemref{Ev1}.

  	    \end{proof} 
  	     
    Since it is  known that projection lattice  as a quantum  logic is a complete Jordan invariant for \vNa s (see \prref{E111}) we can conclude that the same holds for tripotent poset.

  	  	\begin{theo}
  	  Let $M$ and $N$ be \vNa s. Suppose that $U(M)$ and $U(N)$ are quantum logic  isomorphic. Then $M$ and $N$ are Jordan *-isomorphic. 	
  	  	   	  \end{theo}

  	  	As a conclusion, even if the tripotent poset is larger than projection poset, it contains the same amount of information about Jordan parts of \vNa s as their  projection lattices. \\

  	{\bf   Acknowledgement:}\\
  	  
 	  This work was supported by the project OPVVV CAAS\\
 	   CZ.02.1.01/0.0/0.0/16\_019/0000778


\end{document}